\begin{document}

\makeatletter
\def\rank{\mathop{\operator@font rank}\nolimits}
\makeatother

\newtheorem{thm}{Theorem}[section]
\newtheorem{ex}[thm]{Example}
\newtheorem{lem}[thm]{Lemma}
\newtheorem{rmk}[thm]{Remark}
\newtheorem{defi}[thm]{Definition}

\title[Local null controllability]{Local null controllability of the control-affine nonlinear systems with time-varying disturbances. Direct calculation of the null controllable region}

\author{Robert Vrabel} 
\address{Robert Vrabel, Slovak University of Technology in Bratislava, Faculty of Materials Science and Technology, Institute of Applied Informatics, Automation and Mechatronics,  Bottova~25,  917 01 Trnava, Slovakia}
\email{robert.vrabel@stuba.sk}

\date{{\bf\today}}

\begin{abstract}
The problem of local null controllability for the control-affine nonlinear systems $\dot x(t)=f(x(t))+Bu(t)+w(t),$ $t\in[0,T]$ is considered in this paper. The principal requirements on the system are that the LTI pair $\left((\partial f/\partial x)(0), B\right)$ is controllable and the disturbance is limited by the constraint $|f(0)+w(t)|\leq M_d\left(1-\frac{t}{T}\right)^\eta,$  $M_d\geq0$ and $\eta>0.$ These properties together with one technical assumption yield a complete answer to the problem of deciding when the null controllable region have a nonempty interior. The criteria obtained involve purely algebraic manipulations of vector field $f,$ input matrix $B$  and bound on the disturbance $w(t).$ To prove the main result we have derived a new Gronwall-type inequality allowing the fine estimates of the closed-loop solutions. The theory is illustrated and the efficacy of proposed controller is demonstrated by the examples where the null controllable region is explicitly calculated. Finally we established the sufficient conditions to be the system under consideration (with $w(t)\equiv 0$) globally null controllable.
\end{abstract}

\keywords{Nonlinear systems, null controllability, null controllable region, state feedback, Gronwall-type inequality.}
\subjclass[2000]{93B05, 93C10, 93D15}

\maketitle

\section[Introduction]{Introduction}

In this paper we will concern ourselves with the problem of null controllability of the nonlinear systems of the form 
\begin{equation}\label{def_system}
\dot x(t)=f(x(t))+Bu(t)+w(t),\ t\in[0,T], \ x(0)=x_0\in\mathbb{R}^n, 
\end{equation}
where $x(t)\in\mathbb{R}^n$ is the state variable, $u(t)\in\mathbb{R}^m$ is the control input and $w(t)\in\mathbb{R}^n$ represents the total disturbance (unmodelled system dynamics, uncertainty, overall external disturbances that affect the system, {\it etc.}) which is  potentially unknown but with known magnitude constraint $|f(0)+w(t)|\leq M_d\left(1-\frac{t}{T}\right)^\eta$ for some $T,$ $M_d\geq0$ and $\eta>0,$ specified below in the proof of Lemma~\ref{lemma1}. The function $f$ is $C^2$ on $\mathbb{R}^n$ and $B$ is an $n\times m$ constant matrix.  We establish the sufficient conditions for the existence and the method for determining null controllable region $\mathcal{X}_0$ in the sense of the definitions below. Henceforth, we use the following notations: The $n\times n$ matrix $(\partial f/\partial x)(0)$ is a Jacobian matrix of the vector field $f(x)$ evaluated at $x=0$ and an upper dot indicates a time derivative. The superscript $'\,T\,'$ is used to indicate transpose operator. We denote by $|\cdot|$ the Euclidean norm and by $||\cdot||$ a matrix norm induced by the Euclidean norm of vectors, $||A||=\max_{|x|=1}|Ax|.$ It is well-known, see e.~g. \cite{Rugh}, that this norm is equivalent to the spectral norm for matrices, $||A||=\sqrt{\lambda_{max}(A^TA)}.$  The real part of a complex number $z$ is denoted by $\Re(z).$  Further, we shall always assume that the domain of existence of trajectories for the control system (\ref{def_system}) is at least the interval $[0,T]$ for every $x_0,$ every continuous input $u(t)$ and the continuous disturbance $w(t)$ satisfying the constraint above. 

The properties of the systems related to controllability have been analyzed by many researchers for different meaning, among them the concept of null controllability. We will use the following definition from \cite{FabbriJonKlo} that we modified for our purposes. 
\begin{defi}\label{locally_null_c1}  
The system (\ref{def_system}) is said to be {\bf locally null controllable} if there exists an open neighborhood $\mathcal{X}_0$ of the origin in $\mathbb{R}^n$ and a finite time $T>0$  such that, to each $x_0\in\mathcal{X}_0,$ there corresponds a continuous  function $u: [0,T]\rightarrow \mathbb{R}^m$ such that the solution $x(t)$ of (\ref{def_system}) determined by this $u=u(t)$ and $x(0)=x_0$ satisfies $x(T)=0.$
\end{defi}

In general, the concept of controllability is defined as an open-loop control, but in many situations a state feedback control is preferable. The definition is as follows.
\begin{defi}\label{locally_null_c2}   
The system (\ref{def_system}) is said to be {\bf locally null controllable by a state feedback controller} if there exists an open neighborhood $\mathcal{X}_0$ of the origin in $\mathbb{R}^n$ and a finite time $T>0$  such that, to each $x_0\in\mathcal{X}_0,$ there corresponds a continuous control law $v(x,t)$ such that the solution $x(t)$ of (\ref{def_system}) determined by $u=v(x,t)$ and $x(0)=x_0$ satisfies $x(T)=0.$
\end{defi} 

The majority of results for controllability have been established for linear time-invariant (LTI) systems $\dot x=Ax+Bu,$ where Kalman \cite{Kal1}, \cite{Kal2} has shown that a necessary and sufficient condition for global ($\mathcal{X}_0=\mathbb{R}^n$) null controllability  is 
\begin{equation*}
\rank\left(B, AB, \dots,  A^{n-1}B\right)=n,
\end{equation*}
or equivalently, the controllability Gramian matrix $W_c(t_1),$
\begin{equation*}
W_c(t_1)\triangleq\int\limits_0^{t_1} P(t)BB^TP^T(t)dt,
\end{equation*} 
is invertible for any $t_1>0.$ Here $P(t)=e^{At}$ is a fundamental matrix of homogeneous system $\dot x=Ax.$  

The situation is more delicate for linear time-variant (LTV) systems $\dot x=A(t)x+B(t)u.$ Although there is a well-known Gramian matrix-based criterion for the (global) null controllability of such systems, but for a general LTV system there is no analytical expression that expresses $P(t)$ as a function of $A(t).$ Nevertheless, in  \cite{Benzaid} (Theorem~2.~1) has been proved that small perturbations $V(t)$ and $B(t)$ of constant matrices $A$ and $B,$ respectively, preserves the (global) null controllability. 

In terms of nonlinear system controllability, one of the most important results in this field was derived by Lee and Markus \cite{LeeMarkus}, \cite{MarkusLee}. The result states that if a linearized system $\dot x=Ax+Bu$ at an equilibrium point $(0,0)$ [$A=(\partial f/\partial x)(0,0)$ and $B=(\partial f/\partial u)(0,0)$] is controllable, then there exists a local controllable area of the original nonlinear system $\dot x=f(x,u)$ around this equilibrium point. 

Later, it turned out that that fact is also true in the case when the linearized system is time-varying \cite[p.~127]{Coron} and this result gives us a good reason to locally use linearized system instead of the original nonlinear system. In particular, this applies when {\bf (i)} the system is linearized around equilibrium point, in which case the matrices $A,$ $B$ are constants, and the controllability of LTI systems is easy to verify, or 
{\bf (ii)} the results of global controllability do not hold or are not easy to be obtained. The drawback of this approach is that the fundamental theorems do not refer on the region where we can use the linearized systems instead of the original nonlinear systems. Some of the few papers concerning with this topic are \cite{KbayashiShim} and \cite{MahmoudMhaskar}, or \cite{HuLinQiu} for LTI systems with a constrained input.

Completely different principles and techniques than those based on the linearization around the trajectory of control system are behind the geometric control theory. This theory,  for the time-invariant systems $\dot x = f(x,u),$ establishes a connection between the Lie algebras of vector fields and the sets of points reachable by following flows of vector fields.  For your reference, see e.~g. the pioneering works \cite{Brockett}, \cite{Brunovsky}, \cite{chow}, \cite{Lobry1}, \cite{Lobry2} and \cite{Sussmann} or the now classical monographs  \cite{Isidori}, \cite{NijmeijerSchaft}, \cite{Sontag}. The standard assumption that is made throughout these works is that $f$ is an analytic function of the variable $x.$ This analyticity assumption cannot be relaxed without destroying the theory as was carefully analyzed and emphasized in \cite{SussmannJurdjevic}.

Unfortunately, none of this theories is not applicable to the systems considered in present paper in general, and to the best of our knowledge, there has been probably very limited (if any) research on null controllability of nonlinear systems with time-varying disturbances, and thus, this topic does not seem to have been well studied until now. Moreover, the technique of the proof of Lemma~\ref{lemma1} (Section~\ref{lemmas_thm}) allows us to explicitly estimate the null controllable region (Example~\ref{example1}).

In the following section, we formulate the technical result, the new Gronwall-type inequality, used in the proof of Lemma~\ref{lemma1} providing a quantitative estimate of the solutions to system~(\ref{def_system}) on the interval $[0,T].$

\section{Technical result: Gronwall-type inequality}
\begin{lem}(Compare with \cite[p.~35]{Bellman})\label{Gronwall}
If $u_1,v_1,w_1\geq0,$ if $c_1$ is a positive constant, and if
\begin{equation}\label{gronwall1}
u_1\leq c_1+\int\limits_0^t(u_1v_1+w_1)dt_1
\end{equation}
then
\begin{equation}\label{gronwall2}
u_1\leq e^{\int_0^tv_1dt_1}\left[c_1+\rho\left(e^{\int_0^t\frac{w_1}\rho dt_1}-1\right)\right],\ \forall t\geq0,\ \forall\rho>0.
\end{equation}
\end{lem}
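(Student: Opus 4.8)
The plan is to reduce the claim to the classical Bellman--Gronwall estimate and then deliberately weaken it to the stated $\rho$-parametrized form. First I would introduce the right-hand side of (\ref{gronwall1}) as a comparison function, $R(t):=c_1+\int_0^t(u_1v_1+w_1)\,dt_1$, so that $u_1(t)\le R(t)$ for all $t\ge0$ by hypothesis, while $R(0)=c_1>0$ and $R$ is continuously differentiable (its integrand being continuous) with $R'(t)=u_1(t)v_1(t)+w_1(t)$. Since $v_1\ge0$ and $u_1\le R$, this produces the linear differential inequality $R'(t)\le v_1(t)R(t)+w_1(t)$.

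Next I would integrate this inequality using the integrating factor $\mu(t):=\exp\!\big(-\int_0^tv_1\,dt_1\big)>0$. Multiplying through gives $\big(\mu R\big)'(t)\le \mu(t)w_1(t)$, and integrating from $0$ to $t$ and rearranging yields the sharp bound
\begin{equation*}
u_1(t)\le R(t)\le e^{\int_0^tv_1\,dt_1}\left[c_1+\int_0^t e^{-\int_0^s v_1\,dt_1}w_1(s)\,ds\right].
\end{equation*}
Because $v_1\ge0$, the inner weight satisfies $e^{-\int_0^s v_1\,dt_1}\le1$ for every $s\in[0,t]$, so the integral term is dominated by $\int_0^t w_1(s)\,ds$.

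It remains to absorb this quantity into the claimed exponential. For any $\rho>0$ the elementary inequality $e^y\ge1+y$, applied with $y=\tfrac1\rho\int_0^t w_1\,ds\ge0$, gives $\int_0^t w_1\,ds\le\rho\big(e^{\frac1\rho\int_0^t w_1\,ds}-1\big)=\rho\big(e^{\int_0^t\frac{w_1}{\rho}\,dt_1}-1\big)$. Substituting this into the previous display and recalling $u_1\le R$ yields (\ref{gronwall2}), valid for every $t\ge0$ and every $\rho>0$. I do not anticipate a genuine obstacle: the argument is entirely elementary, and the only point requiring a little care is that the passage from the classical estimate to the $\rho$-family rests on the two one-line bounds $e^{-\int_0^s v_1\,dt_1}\le1$ and $e^y\ge1+y$. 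The freedom in $\rho$ is exactly what will later permit balancing the contributions of $c_1$ and of the disturbance term $w_1$ in the closed-loop solution estimates of Lemma~\ref{lemma1}.
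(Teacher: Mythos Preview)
Your argument is correct, but it proceeds along a genuinely different path from the paper's proof. You first derive the \emph{sharp} classical Bellman--Gronwall estimate
\[
u_1(t)\le e^{\int_0^t v_1\,dt_1}\Bigl[c_1+\int_0^t e^{-\int_0^s v_1\,dt_1}\,w_1(s)\,ds\Bigr]
\]
via the integrating-factor/differential-inequality method, and only then weaken it in two steps (the bound $e^{-\int_0^s v_1}\le 1$ followed by $e^y\ge 1+y$) to reach the $\rho$-family~(\ref{gronwall2}). The paper instead applies $e^z\ge 1+z$ at the very beginning to replace $\int_0^t w_1\,dt_1$ by $\rho\bigl(e^{\int_0^t w_1/\rho\,dt_1}-1\bigr)$ inside the hypothesis, and then runs the logarithmic-derivative trick (dividing by the right-hand side, multiplying by $v_1$, adding a carefully chosen term to make both sides exact derivatives, integrating, and exponentiating). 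Your route has the advantage of transparency: it makes explicit that~(\ref{gronwall2}) is a deliberate coarsening of the sharp estimate, and it isolates precisely the two inequalities responsible for the loss. The paper's route is more self-contained in that it does not pass through the classical result, but it is computationally heavier and the intermediate manipulations are less illuminating. One small caveat: you invoke continuous differentiability of $R$ by asserting the integrand is continuous, which the lemma does not state explicitly; the paper's proof tacitly relies on the same kind of regularity, so this is not a defect relative to the original, but strictly speaking both arguments need $u_1v_1+w_1$ to be locally integrable (and your $(\mu R)'$ step only needs absolute continuity of $R$).
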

\begin{proof}
From (\ref{gronwall1}) and the inequality $e^z\geq z+1$ for $z=\int_0^t\frac {w_1}\rho\geq0$
we have for all $\rho>0$
\begin{equation}\label{gronwall3}
u_1\leq c_1+\rho e^{\int_0^t\frac {w_1}\rho dt_1}-\rho+\int_0^tu_1v_1dt_1
\end{equation}
which implies
\[
\frac{u_1}{c_1+\rho\left(e^{\int_0^t\frac {w_1}\rho dt_1}-1\right)+\int_0^tu_1v_1dt_1}\leq1.
\]
Multiplying this with $v_1\geq0$ we obtain
\[
\frac{u_1v_1}{c_1+\rho\left(e^{\int_0^t\frac {w_1}\rho dt_1}-1\right)+\int_0^tu_1v_1dt_1}\leq v_1
\]
and
\[
\frac{u_1v_1+w_1e^{\int_0^1\frac {w_1}\rho dt_1}}{c_1+\rho\left(e^{\int_0^t\frac {w_1}\rho dt_1}-1\right)+\int_0^tu_1v_1dt_1}\leq v_1+\frac{w_1e^{\int_0^t\frac {w_1}\rho dt_1}}{c_1+\rho\left(e^{\int_0^t\frac {w_1}\rho dt_1}-1\right)}.
\]
Integrating both sides between $0$ and $t$ we have
\[
\ln\left[c_1+\rho\left(e^{\int_0^t\frac {w_1}\rho dt_1}-1\right)+\int_0^tu_1v_1dt_1\right]-\ln c_1
\]
\[
\leq\int_0^tv_1dt_1+\ln\left[c_1+\rho\left(e^{\int_0^t\frac {w_1}\rho dt_1}-1\right)\right]-\ln c_1.
\]
Converting this to exponential form  and taking into considerations (\ref{gronwall3}), the inequality becomes (\ref{gronwall2}).
\end{proof}

\section{The auxiliary lemmas and main result}\label{lemmas_thm}

\subsection{Part~I:~The existence of controller from Definition~\ref{locally_null_c2} for (\ref{def_system})}

To prove the existence of the controller we will proceed as follows. By using the time rescaling $t(\tau)=T\left(1-e^{-\omega\tau}\right),$ the original problem
\[
\frac{dx(t)}{dt}=f(x(t))+Bu(t)+w(t), \ x(0)=x_0,\ x(T)=0
\]
is transformed to the problem of asymptotic stabilizability of
\begin{equation}\label{transformed problem}
\frac{da(\tau)}{d\tau}=T\omega e^{-\omega\tau}\left(f(0)+Aa(\tau)+Bb(\tau)+R_2(a)+d(\tau)\right),\  a(0)=x_0, 
\end{equation}
where $a(\tau)=x(t(\tau)),$ $b(\tau)=u(t(\tau))$ and $d(\tau)=w(t(\tau)).$ The matrix $A=(\partial f/\partial x)(0)$ is the Jacobian matrix and $R_2(a)$ denotes a Taylor remainder. Let us now substitute 
\begin{equation}\label{controller_b}
b(\tau)=\frac1{T\omega}e^{\omega\tau}Ka(\tau)
\end{equation}
into the equation (\ref{transformed problem}); here $K$ represents an $m\times n$ constant gain matrix. We obtain 
\[
\frac{da}{d\tau}=\underbrace{(A+BK)}_{\triangleq A_{cl}}a+\underbrace{T\omega e^{-\omega\tau}A}_{\triangleq\tilde B(\tau)}a
\]
\[
+\underbrace{T\omega e^{-\omega\tau}R_2(a)-Aa+T\omega e^{-\omega\tau}(f(0)+d(\tau))}_{\triangleq\tilde R_\omega(a,\tau)},
\]
\[
=\left(A_{cl}+\tilde B(\tau)\right)a+\tilde R_\omega(a,\tau).
\]
It is easy to check that
\begin{equation*}
\int\limits_0^\infty||\tilde B(\tau)||d\tau=T||A||<\infty.
\end{equation*}
Since $R_2(a)$ is clearly $O(|a|^2)$ as $a\rightarrow 0$ we can find the number $\varepsilon>0$ and the constant $\Gamma_0>||A||$ such that
\begin{equation}\label{estimate_tR}
|\tilde R_\omega(a,\tau)|\leq\Gamma_0|a|+T\omega e^{-\omega\tau}|f(0)+d(\tau)|\ \mathrm{for}\ |a|\leq\varepsilon.
\end{equation}
The pair $(A,B)$ is assumed to be controllable therefore 
\begin{equation}\label{coefficient_k1}
||e^{A_{cl}\tau}||\leq k_1e^{-\tilde\lambda(K)\tau}\ \mathrm{for}\ \tau\geq0,
\end{equation}
where
\begin{equation}\label{tlambda}
\tilde\lambda(K)\triangleq-\max\left\{\Re\left(\lambda_i\right),\ i=1,\dots,n: \lambda_i\ \mathrm{be\ an\ eigenvalue\ of}\ A_{cl}\right\}
\end{equation}
and $k_1=k_1(\lambda_1,\dots,\lambda_n)$ is no less than unity (\cite[p.~101]{Rugh}). The coefficient $\tilde\lambda(K)$ may be chosen provisionally arbitrarily, but with all $\Re\left(\lambda_i\right)<0$ and $\lambda_i\neq\lambda_j$ for $i\neq j.$ The admissible range of the values $\tilde\lambda(K)$ is given in (\ref{range_tlambda}) below. Now we consider an auxiliary system
\begin{equation}\label{auxiliary_system}
\frac{d\tilde a}{d\tau}=A_{cl}\tilde a+\tilde B(\tau)\tilde a
\end{equation}
and its solution obtained by variation of parameters
\[
\tilde a(\tau)=e^{A_{cl}\tau}\tilde a(0)+\int\limits_0^\tau e^{A_{cl}(\tau-s)}\tilde B(s)\tilde a(s)ds.
\]
Hence we have an estimate
\[
|\tilde a(\tau)|\leq k_1e^{-\tilde\lambda\tau}|\tilde a(0)|+\int\limits_0^\tau k_1e^{-\tilde\lambda(\tau-s)}||\tilde B(s)|||\tilde a(s)|ds.
\]
Multiplying this with $e^{\tilde\lambda\tau}$ and substituting $|\tilde a(\tau)|e^{\tilde\lambda\tau}$ by $u_1(\tau)$ we get the following inequality for $u_1:$
\[
u_1(\tau)\leq k_1u_1(0)+\int\limits_0^\tau k_1u_1(s)||\tilde B(s)||ds.
\]
Using Lemma~\ref{Gronwall} with $w_1\equiv0,$ that is, the classical Gronwall inequality, we obtain
\[
u_1(\tau)\leq k_1u_1(0)e^{\int\limits_0^\tau k_1||\tilde B(s)||ds}
\]
or
\[
|\tilde a(\tau)|\leq k_1|\tilde a(0)|e^{-\lambda^*\tau} \ \mathrm{where}\ -\lambda^*\triangleq k_1T||A||-\tilde\lambda.
\]
Let $\Phi(\tau,s)=P(\tau)P^{-1}(s),$  where $P$ is a fundamental matrix of (\ref{auxiliary_system}), is its state transition matrix. As was proved in \cite[p.~102]{Rugh},
\begin{equation}\label{estimate_stm}
||\Phi(\tau,s)||\leq k_1e^{-\lambda^*(\tau-s)}
\end{equation}
for all $\tau,$ $s$ such that $\tau\geq s.$ Thus the solution $a(\tau)$ of (\ref{transformed problem}) may be expressed as
\[
a(\tau)=\Phi(\tau,0)a(0)+\int\limits_0^\tau\Phi(\tau,s)\tilde R_\omega(a(s),s)ds,
\]
and taking into consideration (\ref{estimate_tR}) and (\ref{estimate_stm}), we obtain an estimate
\[
|a(\tau)|e^{\lambda^*\tau}\leq k_1|a(0)|+\int\limits_0^\tau \left(k_1\Gamma_0e^{\lambda^*s}|a(s)|+k_1T\omega e^{(\lambda^*-\omega)s}|f(0)+d(s)|\right)ds.
\] 
By applying Lemma~\ref{Gronwall} with  $c_1=k_1|a(0)|=k_1|x_0|,$ $u_1=|a(\tau)|e^{\lambda^*\tau},$ $v_1=k_1\Gamma_0,$ $w_1=k_1T\omega e^{(\lambda^*-\omega)\tau}|f(0)+d(\tau)|$ and $\rho=$product of those parameters from the set $\{k_1, T, \omega\}$ that are greater than $1,$ and multiplying the result by $e^{-\lambda^*\tau}$ we have
\[
|a(\tau)|\leq e^{-\lambda^{**}\tau}\left[k_1|x_0|+\omega\left(e^{k_1T\int_0^\tau e^{(\lambda^{**}+k_1\Gamma_0-\omega)s}|f(0)+d(s)|ds}-1\right)\right],
\]
where $-\lambda^{**}\triangleq k_1\Gamma_0-\lambda^*.$ Taking into account the example illustrating the theory and without loss of generality we have used in the previous step $\rho=\omega,$ which is the right choice for $k_1=1,$ $T=1$ and $\omega=2.$ At this point,  it turns out one of the benefits of this variant of Gronwall-type inequality, namely, we can manipulate with the parameters $\omega,$ $k_1$ and $T$ to be not in the exponent, which can be especially relevant in the calculation of (as large as possible radius $\mu$ of the ball contained in) the null controllable region, see the equation (\ref{ball}) below.  The above-described simple rule for determining the parameters from $\{k_1, T, \omega\}$ that should remain or be removed from the exponent follows directly by comparing the graphs of the functions $y_1(\xi)=e^{\alpha\xi}-1$ and $y_2(\xi)=\left(e^{\alpha}-1\right)\xi$ on the interval $[0,\infty)$ for an arbitrary but fixed parameter $\alpha>0,$  where we find out that $y_1(\xi)-y_2(\xi)<0$ for $\xi\in(0,1)$ and $y_1(\xi)-y_2(\xi)>0$ for $\xi\in(1,\infty)$ independently on the value of $\alpha.$ Now, if   
\begin{equation}\label{estimate_dist}
|f(0)+d(\tau)|\leq M_de^{\delta\tau},\ M_d\geq0, \ \delta\in\mathbb{R}, \tau\geq0,
\end{equation} 
then
\begin{equation}\label{estimate_sol}
|a(\tau)|\leq e^{-\lambda^{**}\tau}\left[k_1|x_0|+\omega\left(e^{k_1TM_d\int_0^\tau e^{(\lambda^{**}+k_1\Gamma_0-\omega+\delta)s}ds}-1\right)\right].
\end{equation}
Hence the solution $a(\tau)\rightarrow 0$ for $\tau\rightarrow \infty$ if 
\begin{itemize}
\item[] $\lambda^{**}>0$ which is equivalent to $\tilde\lambda>k_1\left(\Gamma_0+T||A||\right)$ 
\end{itemize}
and at the same time, if
\begin{itemize}
\item[] $\lambda^{**}+k_1\Gamma_0-\omega+\delta<0,$  that is, $\tilde\lambda<k_1T||A||+\omega-\delta.$
\end{itemize}
Thus, $\tilde\lambda=\tilde\lambda(K)$ in (\ref{tlambda}) must satisfy 
\begin{equation}\label{range_tlambda}
k_1(\Gamma_0+T||A||)<\tilde\lambda<k_1T||A||+\omega-\delta,
\end{equation}
where the parameter  $\omega$ is such that this inequality has meaning. Clearly, for $M_d=0$ this inequality reduced to only its left-hand side. Thus, we get the final estimate of $a(\tau)$ in the form   
\begin{equation}\label{final_estimate2}
|a(\tau)|\leq e^{-\lambda^{**}\tau}\left[k_1|x_0|+\left(e^{-\frac{k_1TM_d}{\gamma}}-1\right)\omega\right]\ \mathrm{for\ all}\ \tau\geq 0,
\end{equation}
where
\[
\lambda^{**}=\tilde\lambda-k_1\left(\Gamma_0+T||A||\right)>0, \quad  \gamma=\lambda^{**}+k_1\Gamma_0-\omega+\delta<0,
\]
and $|x_0|$ is such that for all $\tau\geq0$  is $|a(\tau)|\leq\varepsilon;$ the parameter $\varepsilon$ is defined in (\ref{estimate_tR}). Analyzing (\ref{final_estimate2}), this is satisfied if
\begin{equation}\label{estimate_on_x0} 
k_1|x_0|+\left(e^{-\frac{k_1TM_d}{\gamma}}-1\right)\omega\leq\varepsilon.
\end{equation}
So $\mathcal{X}_0$ contains an open ball $\left\{x_0\in\mathbb{R}^n:\ |x_0|<\mu\right\},$ where $\mu>0$ is a solution of the equation
\begin{equation}\label{ball}
k_1\mu+\left(e^{-\frac{k_1TM_d}{\gamma}}-1\right)\omega=\varepsilon.
\end{equation}
Now, the estimate for solution $x(t)$ of the original system (\ref{def_system}) we obtain by backward substitution $\tau=\frac1\omega\ln\left(\frac{T}{T-t}\right)$ in (\ref{final_estimate2}):
\begin{equation}\label{final_estimate3}
|x(t)|\leq\left(1-\frac{t}{T}\right)^{\lambda^{**}/\omega}\left[k_1|x_0|+\left(e^{-\frac{k_1TM_d}{\gamma}}-1\right)\omega\right].
\end{equation}
Because this inequality holds for every $x_0\in\mathbb{R}^n$ satisfying $|x_0|\leq\mu$ the system is locally null controllable by continuous state feedback controller 
\begin{equation}\label{controller}
u=v_\omega(x,t)=\left(\frac{1}{T-t}\right)\frac{Kx}{\omega}, \ t\in[0,T],
\end{equation}
obtained from (\ref{controller_b}) by the same substitution as above. 

\subsection{Part~II:~Boundedness of controller} 
The controllers defined by the equality (\ref{controller}) could be potentially unbounded in the left neighborhood of $t=T.$ In this part, we determine the sufficient conditions guaranteeing the boundedness of this controller on the whole interval $[0,T].$ Obviously, from the inequality (\ref{final_estimate3}) and (\ref{controller}) follows that this is satisfies if $\lambda^{**}/\omega\geq1.$  From definition of the coefficient $\lambda^{**}$ and (\ref{range_tlambda}) we have that $\lambda^{**}\in(0,\omega-\delta-k_1\Gamma_0)$ for $M_d>0$  and $\lambda^{**}\in(0,\infty)$ for $M_d=0$ because the expression in the parentheses of the estimate (\ref{estimate_sol}) is identically zero in the second case. Hence $\lambda^{**}/\omega\in(0,1-\frac{\delta+k_1\Gamma_0}{\omega})$ or $\lambda^{**}/\omega\in(0,\infty),$ respectively, so that the necessary condition to be $\lambda^{**}/\omega\geq1$ is $\delta<-k_1\Gamma_0,$ and (\ref{estimate_dist}) implies the inequality $|f(0)+w(t)|\leq M_d\left(1-\frac{t}{T}\right)^{-\delta/\omega}$ for $t\in[0,T].$  

\centerline{}

Summarizing the above findings, we reach the following conclusion. 
\begin{lem}\label{lemma1}
Let us consider the control system (\ref{def_system}). Assume that
\begin{itemize}
\item[(H1)]the LTI pair $\left(A, B\right),$  $A=(\partial f/\partial x)(0),$ is controllable; 
\item[(H2)] there exist the constants $\tilde\lambda>0,$ $\omega>0,$ $\delta<0$ and time $T$ such that
\begin{equation}\label{hypothesisH2_1}
k_1(\Gamma_0+T||A||)+\omega\leq\tilde\lambda\leq k_1T||A||+\omega-\delta \ (\mathrm{for}\ M_d>0)
\end{equation}
or
\[
k_1(\Gamma_0+T||A||)+\omega\leq\tilde\lambda \ (\mathrm{for}\ M_d=0);
\]
\item[(H3)] $|f(0)+w(t)|\leq M_d\left(1-\frac{t}{T}\right)^{-\delta/\omega},$ $t\in[0,T],$ where $\delta<-k_1\Gamma_0$ for $M_d>0$ and  $\delta=0$ if $M_d=0;$
and 
\item[(H4)] the equation (\ref{ball}) has a positive solution $\mu.$

\end{itemize}
Then the system (\ref{def_system}) is locally null controllable by a bounded, continuous on $[0,T]$ state feedback controller of the form 
\begin{equation*}
u=v_\omega(x,t)=\left(\frac{1}{T-t}\right)\frac{Kx}{\omega}, \ t\in[0,T].
\end{equation*}
The set $\mathcal{X}_0$ from Definition~\ref{locally_null_c2} contains an open ball $\left\{x_0\in\mathbb{R}^n:\ |x_0|<\mu\right\}.$ 
The constants  $\Gamma_0,$ $k_1$ and $\tilde\lambda=\tilde\lambda(K)$ are defined in  (\ref{estimate_tR}), (\ref{coefficient_k1}) and (\ref{tlambda}), respectively.
\end{lem}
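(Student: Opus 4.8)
The plan is to assemble the construction carried out in Parts~I and~II into a single continuation argument. First I would introduce the time rescaling $t(\tau)=T\!\left(1-e^{-\omega\tau}\right)$, which maps $[0,\infty)$ onto $[0,T)$ and converts the two-point requirement $x(0)=x_0$, $x(T)=0$ into the requirement that the rescaled trajectory $a(\tau)=x(t(\tau))$ of the transformed system~(\ref{transformed problem}) tend to the origin as $\tau\to\infty$. Substituting the feedback~(\ref{controller_b}) produces the closed loop $\dot a=(A_{cl}+\tilde B(\tau))a+\tilde R_\omega(a,\tau)$ with $A_{cl}=A+BK$, where $\tilde B(\tau)=T\omega e^{-\omega\tau}A$ is absolutely integrable on $[0,\infty)$ and $\tilde R_\omega$ collects the Taylor remainder, the cancelled linear term, and the disturbance, subject to the pointwise bound~(\ref{estimate_tR}) valid for $|a|\le\varepsilon$.

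Next I would invoke hypothesis~(H1): controllability of $(A,B)$ lets me choose the gain $K$ so that $A_{cl}$ has simple eigenvalues with $\Re\lambda_i<0$ and $\tilde\lambda(K)$ in~(\ref{tlambda}) placed anywhere in the admissible window, yielding~(\ref{coefficient_k1}). The integrable perturbation $\tilde B$ is absorbed by variation of parameters together with the classical Gronwall inequality (Lemma~\ref{Gronwall} with $w_1\equiv0$), producing the state-transition-matrix bound~(\ref{estimate_stm}) with rate $\lambda^*$. I would then express $a$ by variation of parameters against $\Phi(\tau,s)$, insert~(\ref{estimate_tR}) and the disturbance bound~(\ref{estimate_dist}) (this is exactly~(H3) after the rescaling, with exponent $-\delta/\omega$), and apply the new Gronwall-type inequality Lemma~\ref{Gronwall} with the indicated $c_1,u_1,v_1,w_1$ and the judicious choice of $\rho$ to reach the closed-form estimate~(\ref{estimate_sol})--(\ref{final_estimate2}). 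Hypothesis~(H2) is precisely what pins the constants to the right signs, $\lambda^{**}\ge\omega>0$ and $\gamma<0$, so that the bracket in~(\ref{final_estimate2}) is a genuine positive constant and $|a(\tau)|\to0$.

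The main obstacle is the circularity between the remainder estimate and the a priori bound: inequality~(\ref{estimate_tR}) is valid only while $|a(\tau)|\le\varepsilon$, yet~(\ref{final_estimate2}), which certifies $|a(\tau)|\le\varepsilon$, was derived using~(\ref{estimate_tR}). I would close this gap with a standard maximal-interval argument: let $[0,\tau^{\ast})$ be the largest interval on which $|a|\le\varepsilon$; on it all the estimates above are legitimate, so~(\ref{final_estimate2}) gives $|a(\tau)|\le k_1|x_0|+\bigl(e^{-k_1TM_d/\gamma}-1\bigr)\omega$ for $\tau<\tau^{\ast}$. If $|x_0|<\mu$, where $\mu>0$ is the root of~(\ref{ball}) furnished by~(H4), the right-hand side is strictly below $\varepsilon$, so $a$ cannot reach the sphere $|a|=\varepsilon$; hence $\tau^{\ast}=\infty$ and the bound holds for all $\tau\ge0$. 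This simultaneously shows that $\mathcal{X}_0$ contains the open ball $\{|x_0|<\mu\}$.

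Finally I would undo the rescaling via $\tau=\tfrac1\omega\ln\tfrac{T}{T-t}$ to obtain~(\ref{final_estimate3}); since $\lambda^{**}>0$ the right-hand side tends to $0$ as $t\to T^-$, so the closed-loop solution extends continuously to $[0,T]$ with $x(T)=0$, and the feedback~(\ref{controller_b}) becomes the claimed $v_\omega(x,t)=\tfrac{1}{T-t}\tfrac{Kx}{\omega}$. For boundedness on all of $[0,T]$ (Part~II) I would note from~(\ref{final_estimate3}) and~(\ref{controller}) that $|v_\omega(x(t),t)|$ remains bounded near $t=T$ exactly when $\lambda^{**}/\omega\ge1$; tracking the range of $\lambda^{**}$ allowed by~(H2) shows this forces $\delta<-k_1\Gamma_0$, which is built into~(H3), completing the proof.
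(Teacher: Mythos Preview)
Your proposal is correct and follows essentially the same route as the paper: the same time rescaling, the same two-stage Gronwall argument (classical for the auxiliary linear system, then Lemma~\ref{Gronwall} for the full nonlinear closed loop), and the same Part~II boundedness check. The one place you go slightly beyond the paper is the explicit maximal-interval continuation argument resolving the circularity between the remainder bound~(\ref{estimate_tR}) and the a~priori estimate~(\ref{final_estimate2}); the paper handles this point only implicitly by asserting that~(\ref{estimate_on_x0}) suffices, so your treatment is a modest gain in rigor rather than a different approach.
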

\begin{rmk}
\rm In the connection with the assumption (H2), it is worth noting that if the input matrix $B$ is regular (i.~e., $m=n$ and $\mathrm{det}(B)\neq0$), then $K=B^{-1}(\Delta-A),$ where the state matrix of the closed-loop system $A_{cl}=\Delta$ is diagonal matrix $\mathrm{diag}(\lambda_1,\lambda_2)$ with arbitrary $\lambda_1<\lambda_2<0$ and in this case it is easy to ensure the fulfillment of the left inequality in (\ref{hypothesisH2_1}) (in this case $k_1=1;$ for the details see Example~\ref{example1}).
\end{rmk}
Now we introduce and prove the statement connecting Lemma~\ref{lemma1} with the main result of present paper, Theorem~\ref{main_result}.
\begin{lem}\label{lemma2}
If the system (\ref{def_system}) is locally null controllable by a state feedback controller then the system (\ref{def_system}) is locally null controllable.
\end{lem}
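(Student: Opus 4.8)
The plan is to show that Definition~\ref{locally_null_c2} (null controllability by a state feedback controller) implies Definition~\ref{locally_null_c1} (null controllability via an open-loop control), which is essentially an observation that a closed-loop realization can be ``frozen'' into an open-loop one along the trajectory it produces. First I would invoke the hypothesis: there exist an open neighborhood $\mathcal{X}_0$ of the origin and a finite time $T>0$ such that for each $x_0\in\mathcal{X}_0$ there is a continuous control law $v(x,t)$ for which the solution $x(t)$ of $\dot x(t)=f(x(t))+Bv(x(t),t)+w(t)$ with $x(0)=x_0$ satisfies $x(T)=0$. Fix such an $x_0$ and the corresponding solution $x(\cdot)$.

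Next I would define the open-loop input $u(t)\triangleq v(x(t),t)$ for $t\in[0,T]$. Since $v$ is continuous in $(x,t)$ and $x(\cdot)$ is continuous on $[0,T]$ (it is a solution of the ODE, hence $C^1$), the composition $u$ is a continuous function $[0,T]\to\mathbb{R}^m$, so it qualifies as an admissible input in the sense of Definition~\ref{locally_null_c1}. Then I would verify that the solution of (\ref{def_system}) with this particular $u=u(t)$ and $x(0)=x_0$ is exactly the same function $x(\cdot)$: indeed $x(\cdot)$ satisfies $\dot x(t)=f(x(t))+Bu(t)+w(t)=f(x(t))+Bv(x(t),t)+w(t)$, the same initial condition $x(0)=x_0$, and by the standing assumption in the paper that trajectories of (\ref{def_system}) exist (and, implicitly, are unique — which follows since $f$ is $C^2$, hence locally Lipschitz, and $Bu(t)+w(t)$ is continuous) on $[0,T]$, this is the unique solution. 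Therefore $x(T)=0$, as required. Since $x_0\in\mathcal{X}_0$ was arbitrary, the same $\mathcal{X}_0$ and $T$ witness local null controllability in the sense of Definition~\ref{locally_null_c1}.

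The argument is essentially immediate; the only point that deserves a word of care is the uniqueness of solutions of (\ref{def_system}), which guarantees that the open-loop trajectory coincides with the closed-loop one rather than merely being \emph{a} trajectory with the same initial data. This is covered by the regularity of $f$ (being $C^2$ it is locally Lipschitz in $x$) together with the continuity in $t$ of $Bu(t)+w(t)$, and the paper's standing hypothesis that the domain of existence of trajectories contains $[0,T]$. No new estimates, no Gronwall-type inequality, and nothing from Lemma~\ref{lemma1} are needed here — this lemma is purely the bridge that lets Lemma~\ref{lemma1} be phrased in the open-loop language of Definition~\ref{locally_null_c1} when assembling Theorem~\ref{main_result}.
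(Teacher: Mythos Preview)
Your proposal is correct and follows essentially the same approach as the paper, which simply states that the open-loop input ``determined by this control law'' (i.e., $u(t)=v(x(t),t)$ along the closed-loop trajectory) realizes null controllability in the sense of Definition~\ref{locally_null_c1}. Your version is a more detailed elaboration of this one-line argument, including the continuity of $u$ and the uniqueness of solutions, but the underlying idea is identical.
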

\begin{proof}
The proof is straightforward. If the system (\ref{def_system}) is locally null controllable by a state feedback controller, it is locally null controllable by the input determined by this control law. 
\end{proof}

Now the main result may be formulated as follows. 

\begin{thm}\label{main_result}
Under the assumptions (H1)-(H4) of Lemma~\ref{lemma1} the system (\ref{def_system}) is locally null controllable.
\end{thm}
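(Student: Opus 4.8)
The plan is to chain together the two lemmas already established, since all the real work has been front-loaded. The statement of Theorem~\ref{main_result} asks only for \emph{local null controllability} in the sense of Definition~\ref{locally_null_c1}, and we have two ingredients at hand: Lemma~\ref{lemma1}, which produces a bounded continuous state feedback law $v_\omega(x,t) = \left(\tfrac{1}{T-t}\right)\tfrac{Kx}{\omega}$ that drives the system to the origin from any $x_0$ in the ball $\{|x_0| < \mu\}$, and Lemma~\ref{lemma2}, which says that local null controllability by state feedback implies local null controllability (in the open-loop sense). So the proof is essentially a two-line syllogism.

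More concretely, the first step is to invoke Lemma~\ref{lemma1}: under (H1)--(H4), the system \eqref{def_system} is locally null controllable by a bounded, continuous-on-$[0,T]$ state feedback controller, with $\mathcal{X}_0$ containing the open ball $\{x_0 \in \mathbb{R}^n : |x_0| < \mu\}$, where $\mu > 0$ is guaranteed to exist by (H4). The second step is to feed this conclusion into Lemma~\ref{lemma2}: since \eqref{def_system} is locally null controllable by a state feedback controller, it is locally null controllable. That is the entire argument. One might optionally add a sentence recording that the open neighborhood $\mathcal{X}_0$ of the origin and the finite time $T$ required by Definition~\ref{locally_null_c1} are exactly the ones furnished by Lemma~\ref{lemma1}, and that the continuous input function $u:[0,T]\to\mathbb{R}^m$ demanded there is obtained by substituting the closed-loop trajectory $x(t)$ into $v_\omega$, i.e.\ $u(t) = v_\omega(x(t),t)$, which is continuous on $[0,T]$ precisely because of the boundedness guaranteed in Lemma~\ref{lemma1}.

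There is no genuine obstacle here — the theorem is a corollary packaging of the two lemmas, and its proof is immediate. If anything, the only subtlety worth a word is making sure the \emph{boundedness} clause of Lemma~\ref{lemma1} is actually used: without it, the input $u(t) = v_\omega(x(t),t)$ could blow up as $t \to T^-$ and would fail to be a legitimate continuous function on the closed interval $[0,T]$ as Definition~\ref{locally_null_c1} requires. Since (H2)--(H3) were designed exactly to enforce $\lambda^{**}/\omega \geq 1$ and hence the boundedness of the controller on all of $[0,T]$, this is already taken care of, and the proof need only cite it. A reasonable write-up would therefore read:

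\begin{proof}
By Lemma~\ref{lemma1}, under the hypotheses (H1)--(H4) the system \eqref{def_system} is locally null controllable by a bounded, continuous-on-$[0,T]$ state feedback controller $u = v_\omega(x,t)$; in particular there is an open neighborhood $\mathcal{X}_0$ of the origin (containing the ball $\{|x_0| < \mu\}$, with $\mu$ given by (H4)) and a finite time $T > 0$ such that, for each $x_0 \in \mathcal{X}_0$, the closed-loop solution $x(t)$ with $x(0) = x_0$ satisfies $x(T) = 0$. Setting $u(t) = v_\omega(x(t),t)$ yields a continuous function $u : [0,T] \to \mathbb{R}^m$ (continuity on the whole closed interval being exactly the boundedness assertion of Lemma~\ref{lemma1}) for which the corresponding open-loop solution of \eqref{def_system} coincides with $x(t)$ and hence reaches the origin at time $T$. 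Thus \eqref{def_system} is locally null controllable by a state feedback controller in the sense of Definition~\ref{locally_null_c2}, and Lemma~\ref{lemma2} then gives that \eqref{def_system} is locally null controllable in the sense of Definition~\ref{locally_null_c1}. This completes the proof.
\end{proof}
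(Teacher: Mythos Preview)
Your proof is correct and follows exactly the same route as the paper, which simply states that the theorem follows immediately from Lemma~\ref{lemma1} and Lemma~\ref{lemma2}. Your write-up is more explicit (in particular, your remark that the boundedness clause of Lemma~\ref{lemma1} is what guarantees $u(t)=v_\omega(x(t),t)$ extends continuously to the closed interval $[0,T]$ is a worthwhile clarification), but the underlying argument is identical.
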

The statement immediately follows from the Lemma~\ref{lemma1} and Lemma~\ref{lemma2}. 

\section[null controllable region]{Explicit calculation of the null controllable region}

The applicability of our approach to the explicit calculation of null controllable region is illustrated on the following example.

\begin{ex}\label{example1}
\rm Consider the nonlinear control system
\begin{equation}\label{example_calc}
{\setlength\arraycolsep{2pt}
\left(
\begin{array}{c}
  \dot x_1  \\
  \dot x_2 
\end{array} 
\right)=
\left(
\begin{array}{c}
  \frac{x_2}{1+x_2^2} \\
   x_1^2
\end{array} 
\right)+
\left(
\begin{array}{rc}
  1 & 1 \\
  -1 & 3 
\end{array} 
\right)
\left(
\begin{array}{c}
  x_1  \\
  x_2 
\end{array} 
\right)+
\left(
\begin{array}{rc}
  1 & 0 \\
  0 & 1 
\end{array} 
\right)
\left(
\begin{array}{c}
  u_1  \\
  u_2 
\end{array} 
\right)
}
\end{equation}
\[
\triangleq 
{\setlength\arraycolsep{2pt}
\left(
\begin{array}{c}
  f_1(x_2) \\
  f_2(x_1) 
\end{array} 
\right)+
\tilde A
\left(
\begin{array}{c}
  x_1  \\
  x_2 
\end{array} 
\right)+
B
\left(
\begin{array}{c}
  u_1  \\
  u_2 
\end{array} 
\right).
}
\]
Linearizing this system at $x=0$ we have
\begin{equation*}
{\setlength\arraycolsep{2pt}
A=\left(\partial f/\partial x\right)(0)=\left(\begin{array}{cc}
 0 \ & 1   \\
 0 \ & 0
\end{array} \right)+\tilde A\ \mathrm{and}\ 
B=\mathrm{id} 
}.
\end{equation*}
Therefore  the linearized system is controllable.  Now we compute the constants $\Gamma_0$ and $\varepsilon$ in (\ref{estimate_tR}). Clearly 
\[
|R_2(a_1,a_2)|\leq\frac12\max_{a_2\in\mathbb{R}} |f_1''(a_2)|a_2^2 +\frac12\max_{a_1\in\mathbb{R}} |f_2''(a_1)|a_1^2
\]
\[
\leq 0.72855a_2^2+a_1^2\leq1.72855|a|^2.
\]
So, taking into account that spectral norm $||A||=\sqrt{\lambda_{\max}(A^TA)}=3.6180,$ 
\[
\left\vert T\omega e^{-\omega\tau}R_2(a)-Aa\right\vert\leq1.72855T\omega|a|^2+3.6180|a|\leq\Gamma_0|a|.
\]
The last inequality is satisfied for 
\begin{equation}\label{example_ineq}
|a|\leq\varepsilon(T,\Gamma_0,\omega)=\frac{\Gamma_0-3.6180}{1.72855 T\omega},\ \ \Gamma_0>3.6180.
\end{equation}
Further, let us consider only the real eigenvalues $\lambda_1,\lambda_2$  of $A_{cl}=A+BK$ ($\lambda_1<\lambda_2<0$). From the properties of spectral norm we get
\[
||e^{A_{cl}\tau}||=||e^{PJP^{-1}\tau}||=||Pe^{J\tau}P^{-1}||\leq||P||||P^{-1}||||e^{J\tau}||=||P||||P^{-1}||e^{\lambda_2\tau},
\]
where $J$ is a Jordan canonical form of the matrix $A_{cl}$ obtained by some similarity transformation $P,$ $J=P^{-1}A_{cl}P.$ 

For the state feedback gain matrix 
\[
{\setlength\arraycolsep{2pt}
K=\left(
\begin{array}{rr}
 -12 \ & -2   \\
 1 \ & -13
\end{array} 
\right)
}
\]
we obtain 
\[
{\setlength\arraycolsep{2pt}
A_{cl}=\left(
\begin{array}{rr}
 -11 \ & 0   \\
 0 \ & -10
\end{array} 
\right)
}
\]
directly in the Jordan canonical form, therefore $P=\mathrm{id}$ and $k_1$ from (\ref{coefficient_k1}) can be chosen as $k_1=||P||||P^{-1}||=1.$ Now, for example, if $\Gamma_0=4$ and $T=1,$ then the existence and boundedness of the controller (\ref{controller}) is guaranteed for $\omega$ satisfying
\[
0<\omega\leq\lambda^{**}=-\lambda_2-k_1(\Gamma_0+T||A||),\ \mathrm{i.~e. \ for}\ 0<\omega\leq 2.382.
\]
The corresponding state feedback controller is 
\begin{equation}\label{controller_example}
{\setlength\arraycolsep{2pt}
\left(
\begin{array}{c}
  u_1(t) \\
  u_2(t) 
\end{array} 
\right)=\left(\frac{1}{1-t}\right)\frac{
\left(
\begin{array}{rr}
 -12 \ & -2   \\
 1 \ & -13
\end{array} 
\right)
(x_1(t),x_2(t))^T}{\omega}, \ t\in[0,1],
}
\end{equation}
and $\omega\in(0,2.382].$ For example, if $\omega=2,$ the null controllable region $\mathcal{X}_0$ contains the open ball 
\[
|x_0|<\varepsilon/k_1=0.110497/1=0.110497
\]
and we have the estimate on the solution of (\ref{def_system}) with the initial state satisfying $|x_0|<0.110497$ in the form
\[
|x(t)|\leq\varepsilon\left(1-\frac{t}{T}\right)^{\lambda^{**}/\omega}=0.110497\left(1-t\right)^{2.382/2},
\]
as follows from (\ref{estimate_on_x0}) and (\ref{final_estimate3}).

The Figure~\ref{fig_example} demonstrates the efficacy of the proposed controller. For comparison, in the Figure~\ref{fig_example_dist} is shown the solution of the same control problem with the same values of the parameters,  $\tilde\lambda=10,$ $\Gamma_0=4$ and $T=1,$ to which it was added the time-varying disturbance term $w(t)=\left(0, 0.01(1-t)^{5/2}\cos 0.05t\right)^T.$ In this case, the inequality (\ref{hypothesisH2_1}) also gives the lower bound for $\omega,$ $\omega\in[1.382,2.382].$
\begin{figure}
   \centerline{
    \hbox{
     \psfig{file=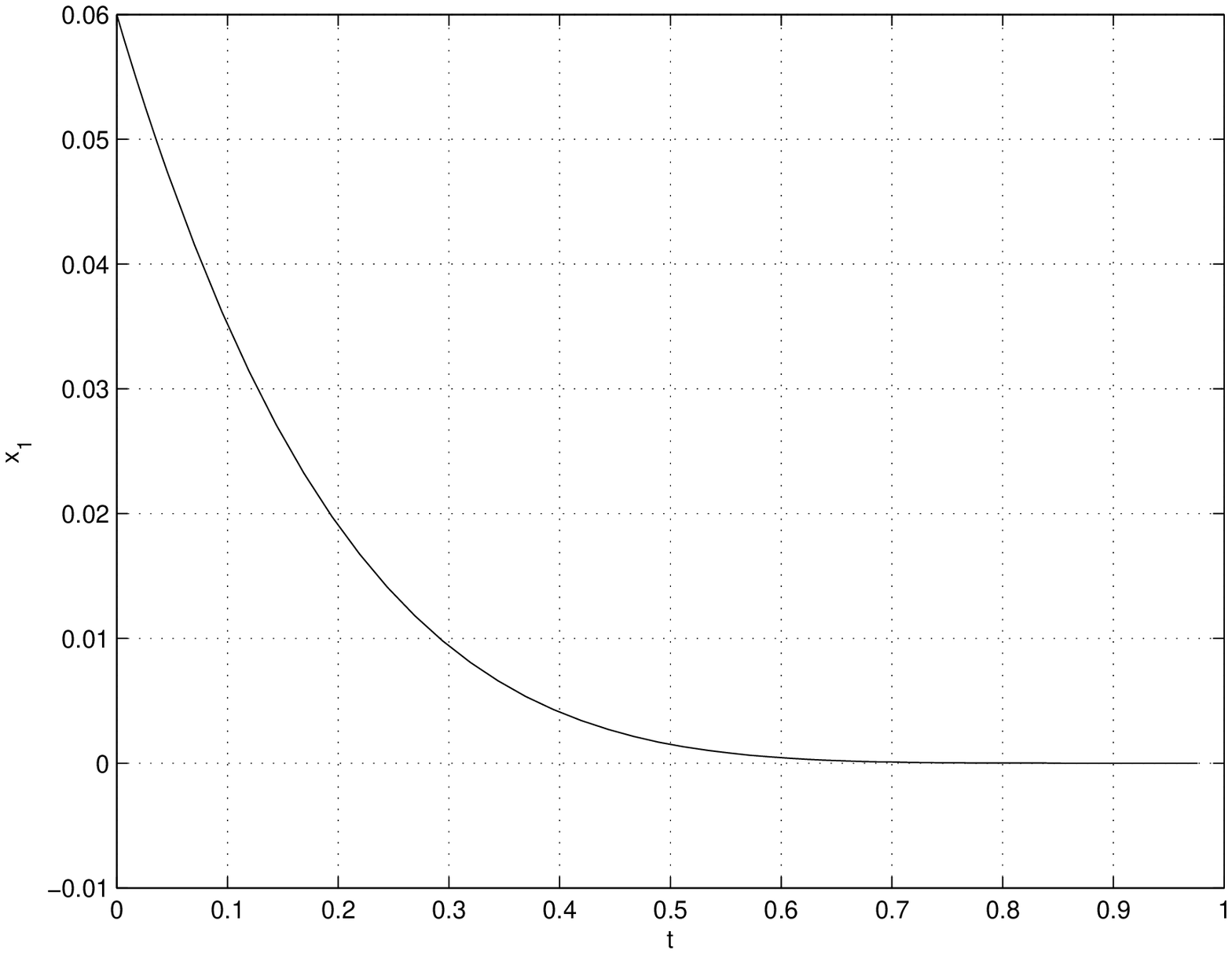,width=5cm, clip=}
     \hspace{1.cm}
     \psfig{file=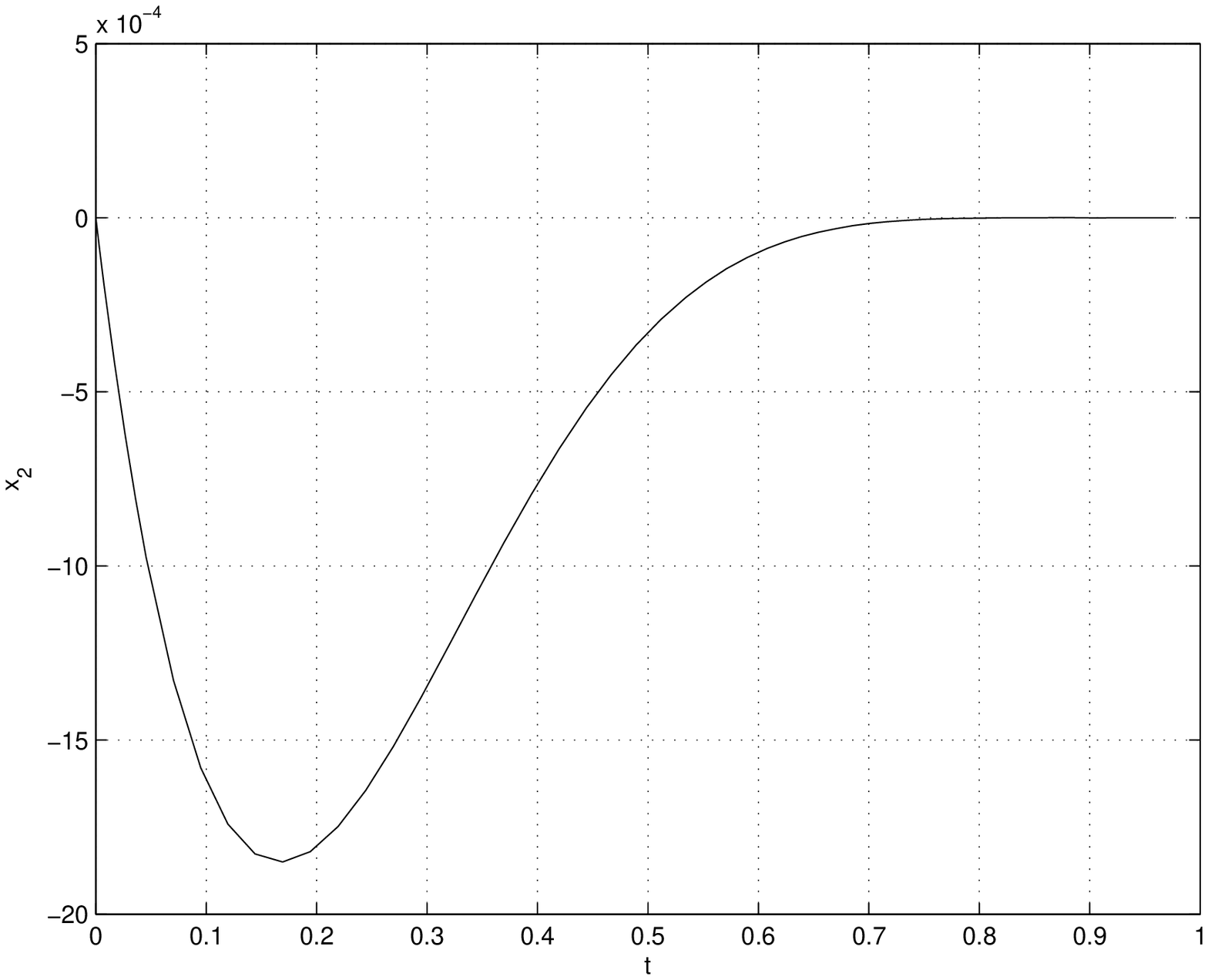,width=5cm,clip=}
    }
   }
      
\caption{The solution of control system (\ref{example_calc}) on the time-interval $[0,1]$ with the state feedback (\ref{controller_example}) for $\omega=2$ and $x_0=(0.06, 0)^T$ ($|x_0|<0.110497$); $\vert \dot x_-(1)\vert=0$ (a left-hand derivative) because $\lambda^{**}/\omega=2.382/2$ is greater than $1.$}
\label{fig_example}
\end{figure} 
\begin{figure}
   \centerline{
    \hbox{
     \psfig{file=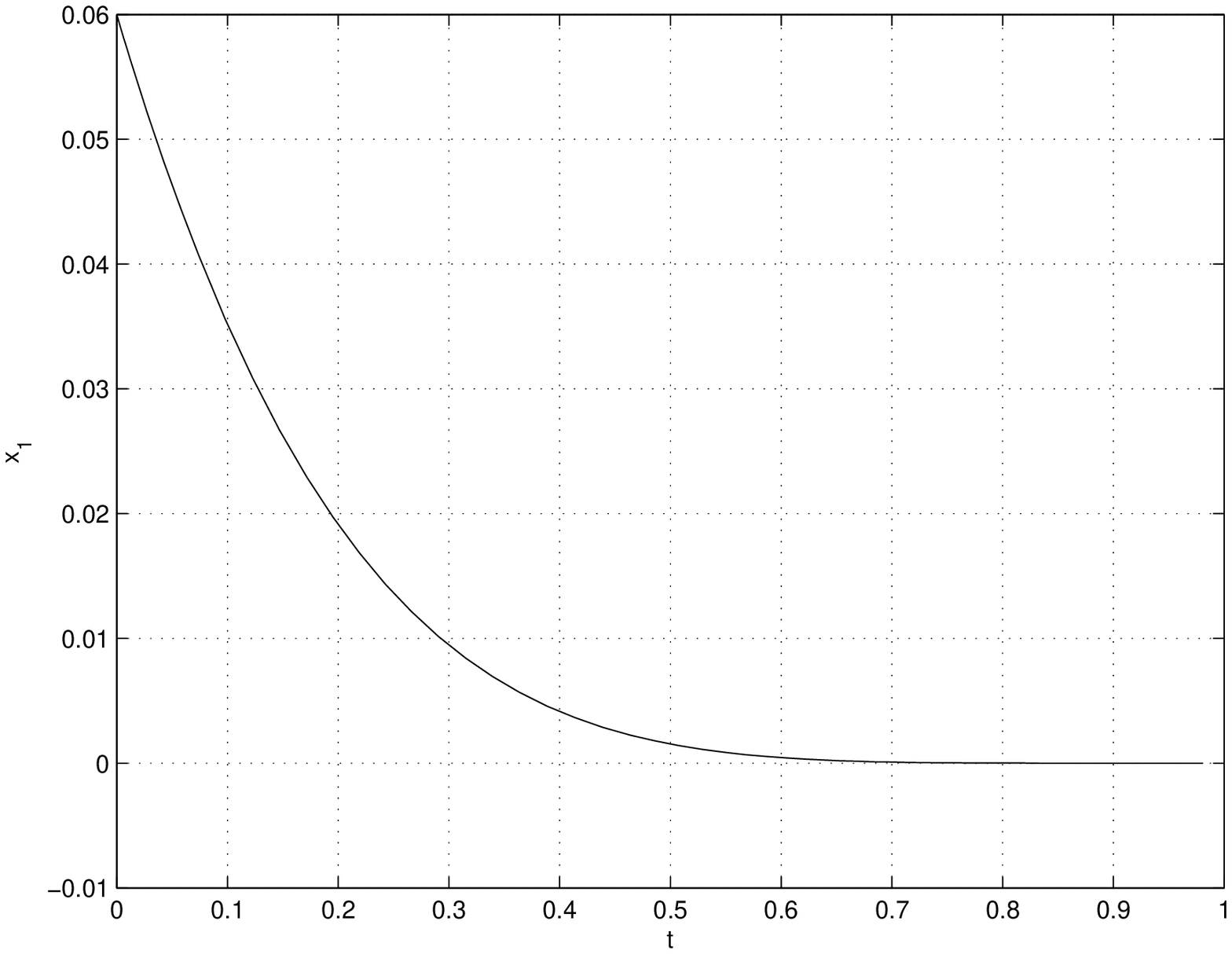,width=5cm, clip=}
     \hspace{1.cm}
     \psfig{file=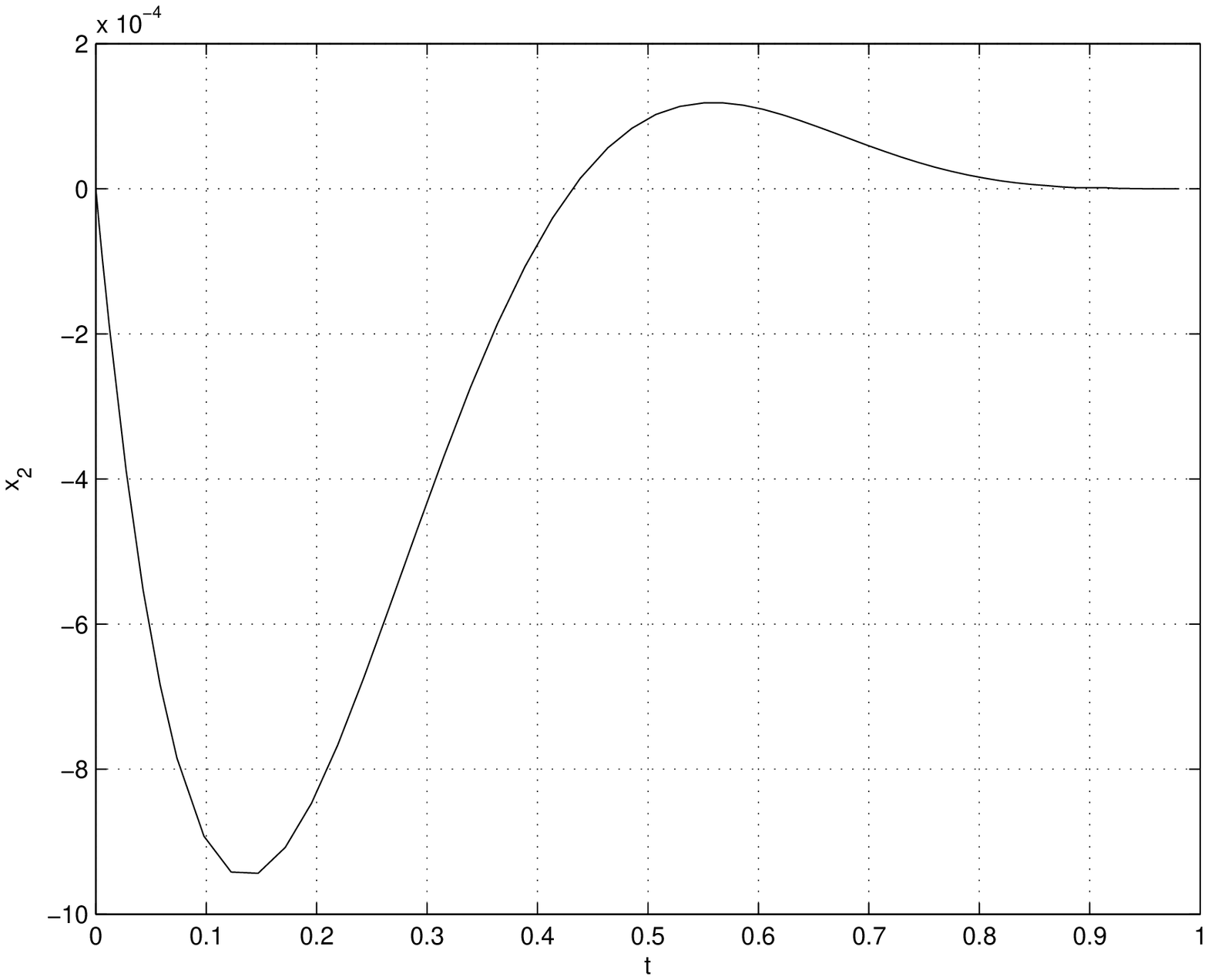,width=5cm,clip=}
    }
   }
     
\caption{The solution of control system (\ref{example_calc}) with the added disturbance term $w(t)=\left(0, 0.01(1-t)^{5/2}\cos 0.05t\right)^T$ on the time-interval $[0,1]$ with the state feedback (\ref{controller_example}) for $\omega=2$ and $x_0=(0.06, 0)^T$ ($|x_0|<\mu=0.07787$). For calculation of the radius $\mu$ we use the equation (\ref{ball}) with the parameters $k_1=1,$ $T=1,$ $M_d=0.01,$ $\gamma=-0.618,$ $\omega=2$ and $\varepsilon=0.110497.$ }
\label{fig_example_dist}
\end{figure} 
\end{ex}

\section[Generalization]{Generalization of the local null controllability to the other points as origin}

The origin is not essential for what has been derived previously, and we can also consider the controllability to the other points, different of origin. This section is just devoted to some generalization in that sense.

Denote by $\mathcal{LR}$ the set of all $x^*\in\mathbb{R}^n$ such that
\begin{itemize}
\item[(H1)$^*$] the pair $\left(A, B\right),$ $A^*=\left(\partial f/\partial x\right)(x^*),$ is controllable;
\item[(H2)$^*$] there exist the constants $\tilde\lambda^*>0,$ $\omega^*>0,$ $\delta^*<0$ and time $T^*$ such that 
\[
k_1^*(\Gamma_0^*+T^*||A^*||)+\omega^*\leq\tilde\lambda^*\leq k_1^*T^*||A^*||+\omega^*-\delta^* \ (\mathrm{for}\ M^*_d>0)
\]
or
\[
k_1^*(\Gamma_0^*+T^*||A^*||)+\omega^*\leq\tilde\lambda^* \ (\mathrm{for}\ M^*_d=0)
\]
\item[(H3)$^*$] $|f(x^*)+w(t)|\leq M^*_d\left(1-\frac{t}{T^*}\right)^{-\delta^*/\omega^*}$ for some $T^*=T^*(x^*)>0,$ and for all $t\in[0,T^*],$ where $\delta^*<-k_1^*\Gamma_0^*$ for $M_d^*>0$ and  $\delta^*=0$ if $M_d^*=0;$ 
\item[(H4)$^*$] the equation 
\[
k_1^*\mu^*+\left(e^{-\frac{k_1^*T^*M_d^*}{\gamma^*}}-1\right)\omega^*=\varepsilon^*
\]
has a positive solution $\mu^*.$
\end{itemize}
All parameters and constants have an analogous meaning as that in the proof of Lemma~\ref{lemma1}. Theorem~\ref{main_result} is embedded in its following natural generalization.
\begin{thm}\label{generalization}
For each $x^*\in\mathcal{LR}$ there exists an open neighborhood $\mathcal{X}_{x^*}$ of $x^*$ such that, to each $x_0\in\mathcal{X}_{x^*},$ there corresponds a continuous  function $u: [0,T^*]\rightarrow \mathbb{R}^m,$ such that the solution $x(t)$ of (\ref{def_system}) determined by this $u(t)=\left(\frac{1}{T^*-t}\right)\frac{K^*(x(t)-x^*)}{\omega^*}$ and $x(0)=x_0$ satisfies $x(T^*)=x^*.$
\end{thm}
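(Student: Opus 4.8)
The plan is to reduce Theorem~\ref{generalization} to Theorem~\ref{main_result} by an affine change of coordinates that moves the target point $x^*$ to the origin. Set $y(t)=x(t)-x^*$ and rewrite the dynamics: since $\dot y=\dot x=f(x)+Bu+w=f(y+x^*)+Bu+w$, define $g(y)\triangleq f(y+x^*)-f(x^*)$ so that $g$ is $C^2$ on $\mathbb{R}^n$ with $g(0)=0$ and $(\partial g/\partial y)(0)=(\partial f/\partial x)(x^*)=A^*$. Then the system in the new variable reads $\dot y=g(y)+Bu+\bigl(f(x^*)+w(t)\bigr)=g(y)+Bu+\tilde w(t)$ with $\tilde w(t)=f(x^*)+w(t)-g(0)$; here the ``$f(0)$-type'' term for the shifted vector field $g$ is $g(0)=0$, so the disturbance bound in (H3) becomes a bound on $|g(0)+\tilde w(t)|=|f(x^*)+w(t)|$, which is exactly what hypothesis (H3)$^*$ supplies. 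The requirement $x(T^*)=x^*$ is equivalent to $y(T^*)=0$, and $x(0)=x_0$ becomes $y(0)=x_0-x^*$.

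Next I would verify that the four hypotheses (H1)$^*$--(H4)$^*$ for $x^*$ are precisely the hypotheses (H1)--(H4) of Lemma~\ref{lemma1} applied to the shifted system $\dot y=g(y)+Bu+\tilde w(t)$ with the starred parameters $\tilde\lambda^*,\omega^*,\delta^*,T^*,M_d^*,\Gamma_0^*,k_1^*,K^*$ playing the roles of the unstarred ones. Indeed, (H1)$^*$ is controllability of $(A^*,B)$, i.e.\ (H1) for $g$; the constant $\Gamma_0^*$ and neighborhood $\varepsilon^*$ come from the estimate~(\ref{estimate_tR}) applied to the Taylor remainder $R_2$ of $g$ at $0$ (which is $O(|y|^2)$ since $g$ is $C^2$ with $g(0)=0$); the coefficient $k_1^*$ comes from~(\ref{coefficient_k1}) for $A_{cl}^*=A^*+BK^*$; and (H2)$^*$, (H3)$^*$, (H4)$^*$ are verbatim the inequalities~(\ref{hypothesisH2_1}), the disturbance constraint, and~(\ref{ball}) with stars. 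Therefore Theorem~\ref{main_result} (equivalently Lemma~\ref{lemma1} together with Lemma~\ref{lemma2}) applies to the shifted system and yields an open ball $\{y_0:\ |y_0|<\mu^*\}$ and, for each such $y_0$, a continuous input $b(t)$ on $[0,T^*]$ — in fact the feedback $b=v_{\omega^*}^{g}(y,t)=\bigl(\tfrac1{T^*-t}\bigr)\tfrac{K^*y}{\omega^*}$ — steering $y(0)=y_0$ to $y(T^*)=0$.

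Finally I would translate this conclusion back. Put $\mathcal{X}_{x^*}\triangleq\{x_0\in\mathbb{R}^n:\ |x_0-x^*|<\mu^*\}$, an open neighborhood of $x^*$. Given $x_0\in\mathcal{X}_{x^*}$, set $y_0=x_0-x^*$, apply the above to get the steering input, and observe that the corresponding control for the original system is $u(t)=b(t)=\bigl(\tfrac{1}{T^*-t}\bigr)\tfrac{K^*(x(t)-x^*)}{\omega^*}$, which is continuous on $[0,T^*]$ by the boundedness part of Lemma~\ref{lemma1} (guaranteed since $\delta^*<-k_1^*\Gamma_0^*$ forces $\lambda^{**}/\omega^*\geq1$ when $M_d^*>0$, and unconditionally when $M_d^*=0$), and since $y(t)=x(t)-x^*$ solves the shifted equation precisely when $x(t)$ solves~(\ref{def_system}), the trajectory satisfies $x(0)=x_0$ and $x(T^*)=x^*$. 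This is exactly the assertion of Theorem~\ref{generalization}, and invoking Lemma~\ref{lemma2} (whose proof is coordinate-free) shows this feedback-based conclusion indeed produces a valid open-loop continuous input.

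The routine part is bookkeeping of the starred constants; the only point deserving care — the ``main obstacle,'' though it is minor — is confirming that the constraint on the disturbance in (H3)$^*$ is the correct translate: one must note that the natural ``offset'' term for the shifted vector field $g$ is $g(0)=0$ rather than $f(0)$, so that the quantity controlled by the bound is $|f(x^*)+w(t)|$, matching the statement, and that the technical assumption on $\delta^*$ and the choice of $\omega^*$ in (H2)$^*$--(H3)$^*$ are internally consistent (i.e.\ the interval for $\tilde\lambda^*$ is nonempty) exactly under the stated conditions. Once this is observed, the theorem follows by direct appeal to Theorem~\ref{main_result}.
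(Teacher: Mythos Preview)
Your proposal is correct and follows essentially the same approach as the paper: an affine change of variable $y=x-x^*$ (the paper writes $\hat x=x-x^*$) reduces the statement to Lemma~\ref{lemma1}/Theorem~\ref{main_result} applied to the shifted system, after which one translates the controller back. The only cosmetic difference is that you define $g(y)=f(y+x^*)-f(x^*)$ so that $g(0)=0$ and absorb $f(x^*)$ into the disturbance, whereas the paper keeps $\hat f(\hat x)=f(\hat x+x^*)$ with $\hat f(0)=f(x^*)$; since (H3) already bounds $|f(0)+w(t)|$ rather than assuming $f(0)=0$, both formulations lead directly to the condition $|f(x^*)+w(t)|\le M_d^*(1-t/T^*)^{-\delta^*/\omega^*}$, i.e.\ (H3)$^*$.
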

\begin{proof}
Let us choose $x^*\in\mathcal{LR}.$ Defining the new state variable $\hat x$ by the relation $\hat x= x-x^*$ ($\hat u=u$), the original problem (\ref{def_system}) is transformed to $\dot{\hat x}=\hat f(\hat x)+B\hat u+w(t),$ $\hat x(0)=x(0)-x^*,$ where $\hat f(\hat x)=f(\hat x+x^*).$ Analyzing local null controllability of this new system analogously as in Lemma~\ref{lemma1}, Lemma~\ref{lemma2} and Theorem~\ref{main_result}, taking into account that $(\partial \hat f/\partial \hat x)(0)=(\partial f/\partial x)(x^*)$ there exists the bounded controller 
\[
\hat u=\hat v_\omega(\hat x,t)=\left(\frac{1}{T^*-t}\right)\frac{K^*\hat x}{\omega^*}, \ t\in[0,T^*],
\]
that is,
\[
u(t)=v_\omega(x(t)-x^*,t)=\left(\frac{1}{T^*-t}\right)\frac{K^*(x(t)-x^*)}{\omega^*}, \ t\in[0,T^*]
\]
for the original problem. Thus we have proved Theorem~\ref{generalization}.
\end{proof}
\begin{rmk}\label{remark:constrained}
\rm In the theoretical part of the paper as well as in the example we considered the control problem with unconstrained input $u(t),$  therefore in the case $M_d=0$ the null controllable region can be unlimitedly enlarged as follows from (\ref{ball}) and (\ref{example_ineq}) at the cost of enlarging $u(t)=v_\omega(x(t),t)$ at the same time since  $|u(t)|=O(\omega^{-1})$ for $\omega\to 0^+$ as we can see from (\ref{final_estimate3}) and (\ref{controller}) for $\lambda^{**}/\omega\geq1.$ But, in the case, when the control variable $u$ is the subject of the constraint of the form $|u(t)|\leq\Phi$ for all $t\in[0,T],$ in addition to the hypothesis (H2) of Lemma~\ref{lemma1}, for the parameter $\omega$ we obtain the sufficient condition to be $u(t)$ the admissible control
\[
|u(t)|\leq\frac{||K|||x(t)|}{(T-t)\omega}
\]
\[
\leq\frac{||K||}{(T-t)\omega}\left(1-\frac{t}{T}\right)^{\lambda^{**}/\omega}\left[k_1|x_0|+\left(e^{-\frac{k_1TM_d}{\gamma}}-1\right)\omega\right]
\]
\[
\leq\frac{||K||}{\omega T^{\lambda^{**}/\omega}}\left[k_1|x_0|+\left(e^{-\frac{k_1TM_d}{\gamma}}-1\right)\omega\right]\leq\Phi.
\]
The last inequality gives another bound to the (H2) for the parameter $\omega.$ 
\end{rmk}

\section{Remark on the global null controllability}
In this section we will determine the sufficient conditions to be the control system (\ref{def_system}) with $w(t)\equiv 0$ globally null controllable, that is, the set $\mathcal{X}_0$ in the Definitions~\ref{locally_null_c1} and \ref{locally_null_c2} is the whole state space, $\mathcal{X}_0=\mathbb{R}^n.$

Extracting the essence of Example~\ref{example1} we have the following key points:
\begin{itemize}  
\item[(a)] The boundedness on $\mathbb{R}^n$ of all second-order partial derivatives of $f$ implies that $\varepsilon$ can be made arbitrarily large by selecting a suitably small value of $\omega$ in the inequality (\ref{example_ineq});
\item[(b)] If $B$ is an invertible matrix, then $k_1=1$ \underline{independently} on $\tilde\lambda$ and which ensure that the assumptions (H1), (H2) and (H4) of Lemma~\ref{lemma1} are satisfied;  
\item[(c)] The equation for computing the null controllable region (\ref{ball}) reduces to $\mu=\varepsilon.$
\end{itemize}
Tus we have the following result on the global null controllability:
\begin{thm}
Let us consider the system (\ref{def_system}) with $w(t)\equiv 0,$  that is, the system $\dot x=f(x)+Bu$ with the initial state $x(0)=x_0.$ Let
\begin{itemize}
\item[(i)] all second-order partial derivatives of the function $f\in C^2$ are bounded on $\mathbb{R}^n;$
\item[(ii)] the matrix $B$ is regular.
\end{itemize}
Then the system under consideration is globally null controllable in the sense that for an arbitrary $T>0$ fixed and every $\mu=\varepsilon=\varepsilon(T,\Gamma_0,\omega)>0,$  there exists a control law of the form, which have been defined in Lemma~\ref{lemma1} with an appropriate feedback gain matrix $K$ such that $x(T)=0$ for all $x_0,$ $|x_0|<\mu.$
\end{thm}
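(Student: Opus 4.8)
The plan is to verify that hypotheses (i) and (ii) force the four hypotheses (H1)--(H4) of Lemma~\ref{lemma1} to hold simultaneously \emph{for every prescribed} $\mu=\varepsilon>0$, after which the conclusion is immediate from Theorem~\ref{main_result} together with the observation that $\mathcal{X}_0$ contains the open ball of radius $\mu$. Since we are in the undisturbed case $w(t)\equiv 0$, we set $M_d=0$ and $\delta=0$; this collapses (H3) to the trivial statement $|f(0)|\leq 0$, so we should actually linearize around a point where $f$ vanishes, or more precisely observe that the constraint $|f(0)+w(t)|\leq M_d(1-t/T)^{-\delta/\omega}$ with $M_d=0$ requires $f(0)=0$ --- which we may assume WLOG by a preliminary translation of coordinates, or simply note that the statement implicitly restricts to this normalization (the $\hat x = x-x^*$ device of Theorem~\ref{generalization} handles the general case). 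With $M_d=0$, equation~(\ref{ball}) reduces to $k_1\mu=\varepsilon$, and by (b)/(ii) below $k_1=1$, so (H4) becomes $\mu=\varepsilon$, which has the positive solution we want for any target radius.

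The key steps, in order, are as follows. \textbf{Step 1 (H1).} Since $B$ is an $n\times n$ regular matrix, the columns of $B$ already span $\mathbb{R}^n$, so $\rank(B,AB,\dots,A^{n-1}B)=n$ and the pair $(A,B)$ with $A=(\partial f/\partial x)(0)$ is controllable. \textbf{Step 2 ($k_1=1$).} Because $B$ is invertible, for \emph{any} diagonal target matrix $\Delta=\mathrm{diag}(\lambda_1,\dots,\lambda_n)$ with distinct negative entries we may take $K=B^{-1}(\Delta-A)$, so $A_{cl}=A+BK=\Delta$ is already diagonal; then $e^{A_{cl}\tau}$ is diagonal with entries $e^{\lambda_i\tau}$, whence $\|e^{A_{cl}\tau}\|=e^{-\tilde\lambda\tau}$ and we may take $k_1=1$ in~(\ref{coefficient_k1}), \emph{independently of the choice of} $\tilde\lambda$ (this is precisely point (b)). \textbf{Step 3 ($\varepsilon$ arbitrarily large).} Mimicking the estimate in Example~\ref{example1}: by (i) there is a global constant $L<\infty$ with $|R_2(a)|\leq \tfrac12 L|a|^2$ for all $a\in\mathbb{R}^n$ (bounding the Hessian uniformly), so $|T\omega e^{-\omega\tau}R_2(a)-Aa|\leq \tfrac12 L T\omega |a|^2+\|A\|\,|a|$, and this is $\leq \Gamma_0|a|$ precisely for $|a|\leq \varepsilon(T,\Gamma_0,\omega)=\dfrac{2(\Gamma_0-\|A\|)}{L\,T\,\omega}$ whenever $\Gamma_0>\|A\|$. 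Fixing $T>0$ and any $\Gamma_0>\|A\|$, we see $\varepsilon\to\infty$ as $\omega\to 0^+$; hence \emph{any} prescribed value of $\varepsilon=\mu$ is attained by choosing $\omega$ small enough. \textbf{Step 4 (H2 and closing the loop).} With $k_1=1$ and $M_d=0$, (H2) reads $\Gamma_0+T\|A\|+\omega\leq\tilde\lambda$; since $\tilde\lambda$ is a free parameter (by Step 2 the constant $k_1$ does not degrade when $\tilde\lambda$ is enlarged), we simply pick $\tilde\lambda\geq \Gamma_0+T\|A\|+\omega$, equivalently choose the negative diagonal spectrum of $\Delta$ far enough into the left half-plane. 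All of (H1)--(H4) now hold; by Theorem~\ref{main_result} the system is locally null controllable with $\mathcal{X}_0\supset\{|x_0|<\mu\}$, and since $\mu=\varepsilon$ was arbitrary, $\bigcup\mathcal{X}_0=\mathbb{R}^n$, i.e.\ global null controllability in the stated sense.

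The main obstacle, and the only step requiring genuine care rather than bookkeeping, is \textbf{Step 2}: one must be certain that driving $\tilde\lambda$ to $-\infty$ (placing the eigenvalues of $A_{cl}$ arbitrarily far left to satisfy the left inequality in (H2)) does \emph{not} inflate the constant $k_1$ in~(\ref{coefficient_k1}), because in general $k_1=k_1(\lambda_1,\dots,\lambda_n)$ depends on the conditioning of the eigenvector matrix $P$. The regularity of $B$ is exactly what rescues us: it lets us realize $A_{cl}$ as a \emph{diagonal} matrix directly (not merely diagonalizable), so $P=\mathrm{id}$, $\|P\|\|P^{-1}\|=1$, and $k_1=1$ uniformly. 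If $B$ were only of full column rank but not square, the pole-placement map would still be surjective onto the characteristic polynomial but we would lose control of the eigenvector conditioning as the poles spread out, and the argument would break. A secondary, purely cosmetic point is the $f(0)=0$ normalization noted above; it is handled either by the translation $\hat x=x-x^*$ of Theorem~\ref{generalization} or by simply reading the hypothesis $M_d=0$ in (H3) as already imposing $f(0)=0$, and it is worth one sentence in the write-up to avoid confusing the reader.
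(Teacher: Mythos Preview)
Your proposal is correct and follows essentially the same approach as the paper: the paper's proof consists precisely of the three observations (a), (b), (c) listed immediately before the theorem statement, and your Steps~1--4 unpack exactly these points (regularity of $B$ gives $k_1=1$ via a diagonal closed-loop matrix independently of $\tilde\lambda$; bounded second derivatives give $\varepsilon=O(\omega^{-1})$; with $M_d=0$ the equation~(\ref{ball}) collapses to $\mu=\varepsilon$). Your added remark about the implicit normalization $f(0)=0$ is a useful clarification that the paper leaves tacit.
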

Now we introduce an example demonstrating that if the assumption on the invertibility of the matrix $B$ is not fulfilled, the system may not be globally null controllable.
\begin{ex}\label{example2}
\rm Let us consider the following system:
\begin{equation}\label{example_calc2}
{\setlength\arraycolsep{2pt}
\left(
\begin{array}{c}
  \dot x_1  \\
  \dot x_2 
\end{array} 
\right)=
\left(
\begin{array}{c}
  x_1^2+\frac{x_2}{1+x_2^2} \\
   0
\end{array} 
\right)
+
\left(
\begin{array}{r}
  0\\
  1 
\end{array} 
\right)
u(t)
}.
\end{equation}

By applying the linearization of his system at $x=0$ we have
\begin{equation*}
{\setlength\arraycolsep{2pt}
A=\left(\partial f/\partial x\right)(0)=\left(\begin{array}{cc}
 0 \ & 1   \\
 0 \ & 0
\end{array} \right),\ 
B=
\left(
\begin{array}{r}
  0\\
  1 
\end{array} 
\right)
}.
\end{equation*}
Therefore  
\begin{equation*}
{\setlength\arraycolsep{2pt}
(B, AB)=
\left(
\begin{array}{cc}
 0 \ & 1   \\
 1 \ & 0
\end{array} \right)
}
\end{equation*}
which means the linearized system is controllable. However, considering the original nonlinear system, there is no controller which can move the states to zero in any time if the initial state $x_1(0)>1/2.$
\end{ex}

\section*{Conclusions}
 
In this paper, we have analyzed the local null controllability of the nonlinear control-affine systems of the form $\dot x(t)=f(x(t))+Bu(t)+w(t)$ with the time varying disturbances. To prove the main result and with purpose to ensure better estimates of the null controllable region, a new Gronwall-type inequality was derived. Under the hypothesis (H1)-(H4) we have shown in Lemma~\ref{lemma1} the existence of an open ball with radius $\mu>0$ contained in the null controllable region $\mathcal{X}_0$ around the origin and the state feedback control law steering the state of system from each initial state $x(0)=x_0\in\mathcal{X}_0$ to the origin at the finite time $T.$ Practical applicability the theory in explicit calculation of the null controllable region was documented on the examples with/without time varying disturbance. Subsequently, we have generalized notion of local null controllability to the more general type of local controllability,  where "null" can be replaced by any point $x^*$ from the set $\mathcal{LR}$  and finally a brief remark on the global null controllability of the system under consideration was given.


\begin{thebibliography}{99}

\bibitem{Rugh}
W.~J.~Rugh, Linear system theory (2nd ed.), Prentice-Hall, Inc., 1996.  

\bibitem{FabbriJonKlo} 
R.~Fabbri, R.~A.~Johnson, and P.~E.~Kloeden, \emph{Digitization of nonautonomous control systems,} J. Differential Equations 195 (2003), pp.~210–-229.

\bibitem{Kal1} 
R.~E.~Kalman,
\newblock  \emph{On the general theory of control systems,}
\newblock Proceedings of the 1$^{st}$ IFAC Congress Automatic Control, 1(1960), pp.~481–-492.

\bibitem{Kal2}  
R.~E.~Kalman, Y.~C.~Ho, and K.~S.~Narendra, 
\newblock  \emph{Controllability of linear dynamical systems,}
\newblock  Contributions to Differential Equations, 1 (1963), pp.~189--213.

\bibitem{Benzaid}
Z.~Benzaid, \emph{Global Null Controllability of Perturbed Linear Systems with Constrained Controls,} J. of Math. Anal. and Applications 136 (1988), pp.~201--216.

\bibitem{LeeMarkus}
E.~B.~Lee and L.~Markus, Foundations of Optimal Control Theory, John Wiley, 1967. 

\bibitem{MarkusLee}
L.~Markus and E.~B.~Lee, \emph{On the Existence of Optimal Controls,} J. Basic Eng 84(1) (Mar 01, 1962), pp.~13--20. doi:10.1115/1.3657236

\bibitem{Coron}
J.-M.~Coron, Control and Nonlinearity, American Mathematical Society, 2007. 


\bibitem{KbayashiShim}
H.~Kobayashi, E.~Shimemura, \emph{On a Null Controllability Region of Nonlinear Systems,} Transactions of the Society of Instrument and Control Engineers, Vol. 16,  No. 1 (1980), pp.~1--5. 

\bibitem{MahmoudMhaskar}
M.~Mahmood and P.~Mhaskar, \emph{Enhanced Stability Regions for Model Predictive Control of Nonlinear Process Systems,} AICHE Journal, 54 (2008), pp.~1487--1498.

\bibitem{HuLinQiu}
T.~Hu, Z.~Lin, and L.~Qiu, \emph{An explicit description of null controllable regions of linear systems with saturating actuators,} Systems \& Control Letters, 47 (2002), pp.~65–-78.

\bibitem{Brockett}
R.~W.~Brockett, \emph{System Theory on Group Manifolds and Coset Spaces,} SIAM Journal on Control, vol. 10, no.2 (1972), pp.~265-–284.  

\bibitem{Brunovsky}
P.~Brunovsky, \emph{Local controllability of odd systems,}  Banach Center Publications, Vol. 1 (1976), pp.~39-–45

\bibitem{chow}
W.~L.~Chow, \emph{\"{U}ber Systeme von linearen partiellen Differential--glei\-chun\-gen erster Ordnung,}  Math. Ann. 117 (1939), pp. 98--105.

\bibitem{Lobry1}
C.~Lobry, \emph{Controllability of nonlinear systems on compact manifolds,} SIAM Journal of Control, vol. 12, no. 1 (1974), pp.~1–-4.

\bibitem{Lobry2}
C.~Lobry, \emph{Contr\^{o}labilit\'{e}  des syst\`{e}mes non lin\'{e}aires,} SIAM Journal of Control, 8 (1970),
pp.~573--605.

\bibitem{Sussmann}
 H.~J.~Sussmann, \emph{A general theorem on local controllability,} SIAM J. Control Optim. 25 (1) (1987), pp.~158–-194.

\bibitem{Isidori}
A.~Isidori, Nonlinear Control Systems, Springer-Verlag, London, 1995.

\bibitem{NijmeijerSchaft}
H.~Nijmeijer and A.~J.~van der Schaft, Nonlinear Dynamical Control Systems, Springer-Verlag, New York, 1990.

\bibitem{Sontag}
E.~D.~Sontag, Mathematical Control Theory, Springer-Verlag, New York, 1998.

\bibitem{SussmannJurdjevic}
H.~J.~Sussmann and V.~Jurdjevic, \emph{Controllability of nonlinear systems,} Journal of
Differential Equations, vol. 12 (1972), pp.~95--116.

\bibitem{Bellman}
R.~Bellman, Stability theory of differential equations, McGraw-Hill Book Company, Inc., 1953. 


\end{thebibliography}
\end{document}